\let\oldequation\equation
\let\oldendequation\endequation
\renewenvironment{equation}{\linenomathNonumbers\oldequation}{\oldendequation\endlinenomath}
\let\expandafter\oldequationstar\csname equation*\endcsname
\let\expandafter\oldendequationstar\csname endequation*\endcsname
\renewenvironment{equation*}{\linenomathNonumbers\oldequationstar}{\oldendequationstar\endlinenomath}
\let\oldalign\align
\let\oldendalign\endalign
\renewenvironment{align}{\linenomathNonumbers\oldalign}{\oldendalign\endlinenomath}
\let\expandafter\oldalignstar\csname align*\endcsname
\let\expandafter\oldendalignstar\csname endalign*\endcsname
\renewenvironment{align*}{\linenomathNonumbers\oldalignstar}{\oldendalignstar\endlinenomath}
\newcommand{\xra}{\xrightarrow}
\newcommand{\ges}{\geqslant}
 \newcommand{\les}{\leqslant}
\newcommand{\del}{\partial}
\newcommand{\D}{\mathsf{D}}
\newcommand{\Dfp}{\mathsf{D^f_+}}
\renewcommand{\S}{\mathcal{S}}
\newcommand{\m}{\mathfrak{m}}
\newcommand{\f}{\bm{f}}
\newcommand{\vp}{{\varphi}}
\DeclareMathOperator{\ann}{ann}
\DeclareMathOperator{\Ext}{Ext}
\newcommand{\pdim}{\operatorname{proj\,dim}}
\newcommand{\grade}{\operatorname{grade}}
\DeclareMathOperator{\Hom}{Hom}
\DeclareMathOperator{\F}{F}
\newcommand{\E}{\operatorname{\leftidx{^2}{E}{}}}
\newcommand{\Ker}{\operatorname{Ker}}
\newcommand{\lotimes}{\otimes^{\operatorname{L}}}
\DeclareMathOperator{\rank}{rank}
\newcommand{\RHom}{\operatorname{RHom}}
\newcommand{\shift}{{\sf\Sigma}}
\DeclareMathOperator{\Tor}{Tor}
\DeclareMathOperator{\depth}{depth}
\DeclareMathOperator{\cx}{cx}
\DeclareMathOperator{\curv}{curv}
\DeclareMathOperator{\edim}{edim}
\DeclareMathOperator{\ps}{P}
\DeclareMathOperator{\h}{H}
\newcommand{\coloneq}{\colonequals}
\renewcommand{\P}{\mathrm{P}}
\newcounter{intro}
\newtheorem{introthm}[intro]{Theorem}
\newtheorem{theorem}[subsection]{Theorem}
\newtheorem{proposition}[subsection]{Proposition}
\newtheorem{lemma}[subsection]{Lemma}
\newtheorem{corollary}[subsection]{Corollary}
\theoremstyle{definition}
\newtheorem{chunk}[subsection]{}
\newtheorem*{ack}{Acknowledgements}
\theoremstyle{remark}
\newtheorem{remark}[subsection]{Remark}
\newtheorem{notation}[subsection]{Notation}
\numberwithin{equation}{subsection}
\title[quasi-complete intersection maps]{Relations between Poincar\'{e} series for \\ quasi-complete intersection homomorphisms}
\author[J.~Pollitz]{Josh Pollitz}
\address{
Mathematics Department, 
Syracuse University, 
Syracuse, NY 13244 U.S.A.}
\email{jhpollit@syr.edu}
\author[L.~\c{S}ega]{Liana \c{S}ega}
\address{
Division of Computing, Analytics and Mathematics, University of Missouri, Kansas City,  MO 64110, U.S.A.}
\email{segal@umkc.edu}
\date{\today}
\keywords{quasi-complete intersection homomorphism, Koszul complex, dg algebra, dg module, Poincar\'e series, large homomorphisms, inertness}
\subjclass[2020]{Primary: 13D40. Secondary: 13D02, 13D07, 16E45.}
\begin{document}

\begin{abstract}
In this article we study base change of Poincar\'e series along a quasi-complete intersection homomorphism $\varphi\colon Q \to R$, where $Q$ is a local ring with maximal ideal $\mathfrak{m}$. In particular, we give a precise relationship between the Poincar\'e series $\mathrm{P}^Q_M(t)$ of a finitely generated $R$-module $M$ to $\mathrm{P}^R_M(t)$ when the kernel of $\varphi$ is contained in $\mathfrak{m}\,\mathrm{ann}_Q(M)$. This generalizes a classical result of Shamash for complete intersection homomorphisms. Our proof goes through base change formulas for Poincar\'e series under the map of dg algebras $Q\to E$, with $E$ the Koszul complex on a minimal set of generators for the kernel of $\varphi.$
\end{abstract}

\maketitle


\section*{Introduction}
\label{sec:introduction}

This article is concerned with change of base formulas for Poincar\'e series in commutative algebra. Recall the Poincar\'e series of a finitely generated module, over a local ring, is the generating series of its sequence of Betti numbers. For a complete intersection homomorphism, this problem has been extensively studied and the relationship between the Poincar\'e series over the source and target is well understood; see, for example, \cite{Avramov/Gasharov/Peeva:1997,Nagata:1962,Shamash:1969}. In this article we study this problem for the much larger class of homomorphisms called \emph{quasi-complete intersection} (abbreviated to q.c.i.) homomorphisms. These homomorphisms are precisely the ones that satisfy the conclusion of a long-standing conjecture of Quillen~\cite{Quillen:1970}, and have been a topic of much recent research~\cite{Avramov/Henriques/Sega:2013,Bergh/Celikbas/Jorgensen:2014,Blanco/Majadas/Rodicio:1998,DeBellevue/Pollitz:2023,Garcia/Soto:2004,Kustin/Sega:2023,Soto:2000}. 

Let $\vp\colon Q\to R$ be a surjective local homomorphism, and let $E$ denote the Koszul complex on a minimal set of generators for $I=\Ker\vp$. If the homology of $E$ is isomorphic to the exterior algebra (over $R$) on $\h_1(E)$ and $\h_1(E)$ is free over $R$, then $\vp$ is said to be q.c.i. Such a homomorphism can equivalently be defined in terms of admitting a two-step Tate resolution; see \cref{c:qci}. In \cite{Avramov/Henriques/Sega:2013}, the authors  investigated  such homomorphisms and gave relationships between the Poincar\'e series $\ps^Q_M$  and $\ps^R_M$ of finitely generated $R$-modules $M$ over $Q$ and $R$. More precisely, when $M=k$ or the minimal generators of $I$ can be extended to minimal generators of the maximal ideal $\m$ of $Q$, they proved the formula:
  \begin{equation}\label{e:qciformula1}
    \P_M^R(t)\cdot\frac{(1-t)^{\edim R}}{(1-t^2)^{\depth R}}=\P_M^Q(t)\cdot\frac{(1-t)^{\edim Q}}{(1-t^2)^{\depth Q}}\,.
    \end{equation}
    In particular, when $\varphi$ is q.c.i., formula \eqref{e:qciformula1} holds precisely when $M$ is inert by $\varphi$, in the sense of Lescot ~\cite{Lescot:1990}.
The aforementioned formula generalizes results of Tate~\cite{Tate:1957} and Nagata~\cite{Nagata:1962} that hold when $\vp$ is complete intersection (meaning that $I$ is generated by a regular sequence). Formula \eqref{e:qciformula1} is also known in the complete intersection case when $I\subseteq \m \ann_Q(M)$, due to Shamash~\cite{Shamash:1969}; the authors in \cite{Avramov/Henriques/Sega:2013} comment that it is not known if Shamash's result can be extended to the q.c.i.\@ case. Our main result establishes this extension: 

\begin{introthm}
\label{intro:thm}
    Let $(Q,\m,k)$ be a local ring, $\vp\colon Q\to R$ a surjective quasi-complete intersection map, and  set $I=\Ker\vp$. Then  a finitely generated $R$-module $M$ with $I\subseteq \m\ann_Q(M)$ is inert by $\vp$; equivalently, $M$ satisfies \eqref{e:qciformula1}.
\end{introthm}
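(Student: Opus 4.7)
The plan is to factor the q.c.i.\@ map $\vp$ through the Koszul complex on a minimal generating set $\bm{f}=f_1,\dots,f_n$ of $I$. Writing $E = \Kos^Q(\bm{f})$, one has dg algebra maps $Q \to E \xra{\epsilon} R$, where $\epsilon$ is the augmentation onto $\h_0(E)=R$. Because $\vp$ is q.c.i., $\h(E) \cong \bigwedge_R \h_1(E)$ with $\h_1(E)$ free over $R$ of some rank $m$, and adjoining to $E$ divided-power variables $Y_1,\dots,Y_m$ in degree $2$ killing cycles representing an $R$-basis of $\h_1(E)$ produces a two-step Tate dg algebra $A$ which is a minimal free resolution of $R$ over $Q$; in particular $\ps^Q_R(t) = (1+t)^n / (1-t^2)^m$. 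My goal is then to prove the multiplicative Poincar\'e series identity $\ps^Q_M(t) = \ps^R_M(t) \cdot \ps^Q_R(t)$, which is precisely Lescot's inertness of $M$ by $\vp$ and, as already noted in the introduction, is equivalent to \eqref{e:qciformula1} for q.c.i.\@ $\vp$.

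To establish this identity I would take a minimal $R$-free resolution $F \to M$ and upgrade it to a dg module over $A$, so that the semifree extension $A \otimes_R F$ is a $Q$-free resolution of $M$ with the correct Poincar\'e series $\ps^R_M(t)\cdot(1+t)^n/(1-t^2)^m$. Equipping $F$ with an $A$-action amounts to producing higher homotopies on $F$: operators $\sigma_{X_i}$ trivializing multiplication by $f_i$ (the $X_i$ variables of $A$) and operators $\sigma_{Y_j}$ trivializing the chain-level action coming from the cycles $z_j$ (the $Y_j$ variables), all mutually compatible with the divided-power relations. This is the q.c.i.\@ analogue of the Shamash construction, with an extra layer of homotopies corresponding to the $Y_j$'s that has no counterpart in the c.i.\@ case.

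The main obstacle is controlling minimality: I need all of these higher homotopies to have image in $\m F$, so that the differential on $A\otimes_R F$ has entries in $\m$ and the resulting $Q$-resolution is minimal. This is precisely where the hypothesis $I \subseteq \m\ann_Q(M)$ becomes indispensable. Each $f_i \in \m\ann_Q(M)$ factors as an element of $\m$ times an element annihilating $M$, which lets one choose the $\sigma_{X_i}$ with image in $\m F$; an inductive argument, using that representatives of the cycles $z_j$ have coefficients in $I \subseteq \m\ann_Q(M)$, propagates the same minimality to the $\sigma_{Y_j}$'s. I anticipate the most delicate step to be compatibility of these minimality choices with the divided-power iterates $\sigma_{Y_j}^{(k)}$ for $k\ges 2$, since the higher divided powers are genuinely present in the q.c.i.\@ (non-c.i.) case and the standard Shamash bookkeeping must be carried out one level up the Tate tower. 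Once minimality is secured, reading off the free ranks of $A \otimes_R F$ yields $\ps^Q_M(t) = \ps^R_M(t)\cdot(1+t)^n/(1-t^2)^m = \ps^R_M(t)\cdot \ps^Q_R(t)$, which is the desired inertness identity.
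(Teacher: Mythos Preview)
Your proposal contains a fundamental miscalculation: the identity $\ps^Q_M(t)=\ps^R_M(t)\cdot\ps^Q_R(t)$ is \emph{not} Lescot's inertness condition, and it is in fact false under the hypotheses of the theorem. Inertness is the equality $\ps^R_M(t)\,\ps^Q_k(t)=\ps^Q_M(t)\,\ps^R_k(t)$; these coincide only when $\vp$ is large, which a q.c.i.\@ map with $I\subseteq\m^2$ typically is not. Already in the complete intersection case $Q=k\llbracket x\rrbracket$, $R=Q/(x^2)$, $M=k$ one has $I=(x^2)\subseteq\m\cdot\ann_Q(k)$, yet $\ps^Q_k(t)=(1-t)^{-1}$ while $\ps^R_k(t)\cdot\ps^Q_R(t)=(1-t)^{-1}(1+t)$, so your target identity fails. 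Consequently the twisted complex you are building on $A^\natural\otimes_Q\widetilde F$ can never be a \emph{minimal} $Q$-resolution of $M$: its rank generating function is $\ps^R_M(t)\cdot(1+t)^n/(1-t^2)^m$, which strictly dominates $\ps^Q_M(t)$. The minimality argument you sketch for the higher homotopies therefore cannot succeed as stated; at best it reproduces the change-of-rings inequality $\ps^Q_M\preccurlyeq\ps^R_M\cdot\ps^Q_R$, which holds for every $M$ without any hypothesis on $I$. You have also conflated two different constructions: the Shamash resolution in the c.i.\@ case is built on $\varGamma\otimes_Q\widetilde F$ with $\varGamma$ the divided-power algebra on degree-two variables (size $(1-t^2)^{-n}$), not on the Tate resolution $A$ of $R$.

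The paper avoids any direct construction of a minimal $Q$-resolution. Instead it passes through the Koszul dg algebra $E$: the hypothesis $I\subseteq\m\ann_Q(M)$ forces the equality $\ps^E_M(t)=\ps^Q_M(t)\,(1-t^2)^{-n}$ (this is the paper's \cref{intro:thm2}\,(1), the genuine dg analogue of Shamash), while the change-of-rings inequality for $E\to R$ together with $\ps^E_R(t)=(1-t^2)^{-m}$ gives $\ps^Q_M(t)\,(1-t^2)^{-n}\preccurlyeq\ps^R_M(t)\,(1-t^2)^{-m}$. This is then squeezed against Lescot's inequality \cref{e:inert}, using the known relation $\ps^Q_k(t)=\ps^R_k(t)\,(1-t^2)^{n-m}$, to force equality throughout.
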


This is \cref{t:main} in the paper and is presented with a slightly different (but equivalent) formulation, cf.\@ \cref{r:equivalent-inert}. We also show in \cref{e:generalineq} that there is a more general inequality that holds for any q.c.i.\@ map $\vp\colon Q\to R=Q/I$; namely, if $n$ and $m$ denote the minimal number of generators of $I$ and its first Koszul homology, respectively, then  for any finitely generated $R$-module $M$ we have
\begin{equation*}
\P^Q_M(t)\preccurlyeq \frac{(1+t)^{n-m}}{(1-t)^m}\P_M^R(t)\,,
\end{equation*}
with equality whenever $I\cap \m^2\subseteq \m I$. 

The proofs of these results are given after first establishing, in \cref{sec:koszul}, intermediary results that describe $\ps^Q_M$ in terms of the Poincar\'e series of $M$ regarded as a differential graded (abbreviated to \emph{dg}) module over $E$. Here $E$ is viewed as a dg $Q$-algebra in the usual way, i.e.~an exterior algebra on $E_1$ with differential equal to the unique $Q$-linear derivation determined by mapping a basis of $E_1$ bijectively to a minimal generating set for $I$.  The main result from \cref{sec:koszul}, applied to prove aforementioned results in \cref{sec:qci}, is the following:
\begin{introthm}
\label{intro:thm2}
Fix a local ring $(Q,\m,k)$, an ideal  $I$ of $Q$ minimally generated by a sequence of length $n$, and set $E$ to be the Koszul complex on a minimal generating set of $I$ and $R=Q/I$. For each bounded below complex of finitely generated $R$-modules $M$, there are coefficient-wise inequalities:
\begin{align*}
\ps_M^E(t)&\preccurlyeq \ps_M^Q(t)\cdot (1-t^2)^{-n}\,;\\
\ps_M^Q(t)&\preccurlyeq \ps_M^E(t)\cdot (1+t)^n\,.
\end{align*}
Furthermore,
\begin{enumerate}
\item if $I \subseteq \m\ann_Q(M)$, then equality holds in the first inequality above;
\item if $I\cap \m^2\subseteq \m I$, then equality holds in the second inequality above.
\end{enumerate}
\end{introthm}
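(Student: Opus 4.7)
The plan is to compare both Poincar\'e series through semi-free resolutions, using that $E$ is graded-free over $Q$ of total rank $2^n$, with the summand $\bigwedge^q Q^n$ contributing $\binom{n}{q}$ copies of $Q$ in homological degree $q$.

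First I would handle the second inequality by fixing a minimal semi-free dg $E$-resolution $F\xrightarrow{\simeq}M$. Restricting scalars along $Q\to E$ makes $F$ a semi-free (but generally non-minimal) $Q$-resolution of $M$, with graded $Q$-rank in degree $m$ equal to $\sum_{q=0}^n \binom{n}{q}\beta^E_{m-q}(M)$ -- generating function $\ps^E_M(t)(1+t)^n$. Since $\beta_m^Q(M)\le \operatorname{rank}_Q F_m$, the inequality $\ps^Q_M(t)\preccurlyeq \ps^E_M(t)(1+t)^n$ is immediate. For equality under $I\cap \m^2\subseteq \m I$, I would argue that $F$ is itself $Q$-minimal. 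Writing $F = E\otimes_Q V$ for a graded $Q$-free module $V$, $E$-minimality of $F$ controls only the ``constant-in-$E$'' part of $d_F(v)\in E\otimes V$; the hypothesis (equivalent to $f_1,\dots,f_n$ being part of a minimal generating set of $\m$), combined with $d_F^2=0$, should force the remaining coefficients -- those attached to $e_I$ with $|I|\ge 1$ -- to lie in $\m$ as well.

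For the first inequality, the strategy is dual: start with a minimal $Q$-free resolution $G\to M$ and set $F^{(0)}\coloneq E\otimes_Q G$. This is $E$-semi-free with $E$-rank $\beta^Q_i(M)$ in degree $i$, but carries extra homology -- since $IM=0$, the Koszul differential on $E\otimes_Q M$ vanishes and $H_i(F^{(0)})\cong \bigwedge^i R^n\otimes_R M$. Iteratively adjoining $E$-free generators in ascending degrees (in the spirit of Tate's adjunction of divided-power variables) kills the unwanted cycles and produces a semi-free $E$-resolution of $M$. Careful bookkeeping shows the total added $E$-rank inflates $\ps^Q_M(t)$ by a factor of $(1-t^2)^{-n}$ -- morally, one divided-power tower per Koszul generator, each contributing $1+t^2+t^4+\cdots$ -- giving $\ps^E_M(t)\preccurlyeq \ps^Q_M(t)(1-t^2)^{-n}$.

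The main obstacle is the equality case (1). Under $I\subseteq \m\ann_Q(M)$, each $f_i$ can be written as $\sum_j m_{ij}a_{ij}$ with $m_{ij}\in\m$ and $a_{ij}\in\ann_Q(M)$. I would use this to arrange the construction above so that every adjoined $E$-free generator contributes \emph{minimally}: the cycles in $H_i(F^{(0)})$ representing $\bigwedge^i R^n\otimes_R M$ can be chosen with attaching coefficients in $\m\cdot F^{(0)}$, and an inductive argument should propagate this $E$-minimality through the subsequent Tate stages. The technical heart is tracking how the $Q$-syzygies of $M$ interact with the Koszul differential to preserve minimality at each step, and it is precisely where the hypothesis $I\subseteq \m\ann_Q(M)$ becomes indispensable.
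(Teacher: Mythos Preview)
Your plan for the second inequality matches the paper's: restrict a minimal semifree $E$-resolution to $Q$ and count. For the equality case (2), however, the paper runs the argument in the opposite direction. Rather than proving an $E$-minimal resolution is $Q$-minimal via $d_F^2=0$, it invokes a result of Avramov (\cite[Proposition~2.2.2]{Avramov:2010}) stating that when $f\in\m\smallsetminus\m^2$ the \emph{minimal $Q$-free resolution} of $M$ already carries a dg $E$-module structure; since $\m Q\subseteq\m_E$, it is automatically $E$-minimal, and the equality follows. Your $d_F^2=0$ argument is plausible in the $n=1$ case but is not what the paper does, and you have not indicated how it survives the induction on $n$.

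For the first inequality the paper does not build the resolution iteratively. It uses a closed-form semifree $E$-resolution
\[
U_E(F)\;=\;E\otimes_Q\varGamma\otimes_Q F,\qquad \partial=\partial^E\otimes 1\otimes 1+1\otimes 1\otimes\partial^F+\textstyle\sum_i\big(e_i\otimes\chi_i\otimes 1-1\otimes\chi_i\otimes e_i\big),
\]
due to Avramov--Buchweitz, where $F$ is a $Q$-free resolution of $M$ and $\varGamma=Q\langle y_1,\dots,y_n\rangle$ is the divided-power algebra. Tensoring with $k$ over $E$ gives $(k\otimes_Q\varGamma)\otimes_k(k\otimes_Q F)$, whose Hilbert series is $(1-t^2)^{-n}\ps^Q_M(t)$, and the inequality drops out. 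Your Tate-style adjunction is morally building the same object, but the ``careful bookkeeping'' you allude to is precisely what the explicit formula for $U_E(F)$ makes unnecessary.

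The genuine gap is in (1). The paper does \emph{not} argue that the above $E$-resolution of $M$ is minimal under the hypothesis $I\subseteq\m\ann_Q(M)$; in fact it computes $\Tor^E(M,k)$ from the other side. It takes a dg $Q$-algebra resolution $A\xrightarrow{\simeq}k$, observes that the hypothesis allows one to choose the dg $E$-structure on $A$ so that $e_iA\subseteq\ann_Q(M)\,A$, and then checks that both ``correction'' terms $e_i\otimes\chi_i\otimes 1$ and $1\otimes\chi_i\otimes e_i$ vanish on $M\otimes_E U_E(A)$ (the first because $e_iM=0$, the second because $e_iA\subseteq\ann_Q(M)A$). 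This collapses the complex to $M\otimes_Q\varGamma\otimes_Q A$ with only $1\otimes1\otimes\partial^A$ as differential, yielding $\Tor^E(M,k)\cong\Tor^Q(M,k)\otimes_Q\varGamma$ on the nose. Your proposed inductive propagation of minimality through successive Tate stages may be workable, but it is a different and substantially harder route: you would need to control not just the first layer of attaching maps but all interactions between the adjoined divided powers and the $Q$-syzygies of $M$, and you have not said how.
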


The equalities in \cref{intro:thm2} generalize the known results for complete intersection homomorphisms mentioned above after \cref{e:qciformula1} to arbitrary surjective maps; the only catch is that one must replace the local ring $R$ with the dg $Q$-algebra $E$, which is quasi-isomorphic to $R$ only when $\vp$ is complete intersection. The idea of replacing $R$ by $E$ to witness complete intersection-like behavior is one previously exploited in \cite{Pollitz:2019,Pollitz:2021}; it is worth highlighting that the numerical results in this article are a new utility of this perspective.

\section{Background}
\label{sec:background}

Throughout, $Q$ will denote a commutative noetherian local ring with maximal ideal $\m$ and residue field $k$. Recall that a differential graded, henceforth \emph{dg}, $Q$-algebra is a graded $Q$-algebra equipped with compatible differential. That is to say, a graded $Q$-algebra $A=\{A_i\}_{i\in \mathbb{Z}}$ with a degree $-1$ endomorphism $\del$ satisfying  $\del^2=0$ and the Leibniz rule:
\[
\del(a\cdot b)=\del(a)\cdot b+(-1)^{|a|}a\cdot \del(b)\,;
\]
here $|a|$ denotes the unique value $i$ for which $a$ belongs to $A_i.$ The reader is directed to \cite{Avramov:2010} for the necessary background on dg algebras. 

\begin{chunk}
We say a dg $Q$-algebra $A$ is local if it is non-negatively graded, $(A_0,\m_0)$ is a local ring, and each $\h_i(A)$ is finitely generated over $\h_0(A).$ In this case, we write $\m_A$ for the maximal dg ideal  of $A$; explicitly, 
\[
\m_A=\m_{0}\oplus A_1\oplus A_2\oplus \cdots \,.
\]
\end{chunk}

For the remainder of the section, fix a local dg $Q$-algebra $A$ whose residue field $A/\m_A$ is $k$, the residue field of $Q$. 

\begin{chunk}
Let $\D(A)$ denote the derived category of dg $A$-modules; cf.\@ \cite{Krause:2022}  or \cite[Section~2]{Avramov/Buchweitz/Iyengar/Miller:2010}. We write $\Dfp(A)$ for the full subcategory of $\D(A)$ consisting of all dg $A$-modules $M$ where $\h_i(M)=0$ for $i\ll 0$ and each $\h_i(M)$ is finitely generated over $\h_0(A)$.  

We let $(-)^\natural$ denote the functor that forgets the differential of a dg $A$-module and regards it as a graded module. That is to say, if $M$ is a dg $A$-module, then $M^\natural$ is the underlying graded module over the graded algebra $A^\natural$. 
\end{chunk}

\begin{chunk}
\label{c:resolution}
Next, we recount some background on semifree dg modules; cf.\@ \cite[Section~1]{Avramov/Halperin:1986} (see also \cite[Chapter~6]{Felix/Halerpin/Thomas:2001}).  Recall a dg $A$-module $F$ is semifree if admits an exhaustive filtration by dg $A$-submodules
\[
0=F(-1)\subseteq F(0)\subseteq F(1)\subseteq \ldots \subseteq F
\]
where each subquotient $F(i)/F(i-1)$ is a direct sum of shifts of $A$. In the present setting, any bounded below dg $A$-module $F$ with $F^\natural$ a free graded $A^\natural$-module is semifree. For every dg $A$-module $M$ there exists a semifree dg $A$-module $F$ and a quasi-isomorphism $F\to M$, that is unique up to homotopy equivalence; we call such a map (or the semifree module) a \emph{semifree resolution} of $M$ over $A$. 

If $M$ is in $\Dfp(A)$, then there exists a semifree resolution $F$ of $M$ over $A$ satisfying: 
\begin{enumerate}
    \item $\del^F(F)\subseteq \m_A F\,,$ and
    \item $F^\natural \cong \oplus_{i\in \mathbb{Z}}\shift^i(A^{\beta_i})^\natural$ where $\beta_i=0$ for $i\ll 0;$
\end{enumerate}
see, for example, \cite[Appendix~B.2]{Avramov/Iyengar/Nasseh/SatherWagstaff:2019}.
We will refer to such a resolution $F$ as a \emph{minimal} semifree resolution of $M$ over $A$. 
\end{chunk}

\begin{chunk}
\label{c:exttor}
A dg $A$-module $M$ defines exact endofunctors $-\lotimes_A M$ and $\RHom_A(M,-)$ on $\D(A)$ given by $-\otimes_A F$ and $\Hom_A(F,-)$, respectively, where $F\to M$ is a semifree resolution of $M$ over $A$.
These are well-defined by \cref{c:resolution}. Set 
\[
\Ext_A(M,-)\colonequals \h(\RHom_A(M,-))\quad \text{and}\quad \Tor^A(M,-)\colonequals \h(M\lotimes_A -)\,.
\]
\end{chunk}

\begin{chunk}
\label{c:poincare}
Let $M$ be in $\Dfp(A)$.
The $i^{\text{th}}$ \emph{Betti number of $M$ over $A$} is 
\[
\beta^A_i(M)\coloneq \rank_k \Tor^A_i(M,k)=\rank_k\Ext_A^i(M,k)\,.
\]
These are finite for all $i$ and zero for $i\ll 0$;
see \cref{c:resolution}.  
The \emph{Poincar\'{e} series of $M$ over $A$} is the formal Laurent series 
\[
\ps^A_M(t)\coloneq \sum_{i\in\mathbb{Z}}\beta^A_i(M) \  t^i \,.
\]
For a graded $k$-vector space $V=\{V_i\}_{i\in \mathbb{Z}}$, we write $\h_V(t)$ for its Hilbert series 
\[
\h_V(t)=\sum_{i\in \mathbb{Z}}(\rank_k V_i) \ t^i\,.
\]
So if $F\to M$ is a minimal semifree resolution of $M$ over $A$, then  $\h_{F\otimes_A k}(t)=\P^A_M(t).$
\end{chunk}

\begin{chunk}
\label{c:gamma}
We write $A\langle X\rangle$ for the semifree dg $A$-algebra extension obtained by successively adjoining variables to kill cycles in the sense of Tate; see \cite{Tate:1957} (as well as \cite[Section~6]{Avramov:2010} or \cite{Gulliksen/Levin:1969}). Here $X=X_1,X_2,\ldots $ where each $X_i$ consists of exterior variables when $i$ is odd and divided power variables when $i$ is even.  Hence, as a graded $A$-algebra, $A\langle X\rangle$ is the free strictly graded-commutative divided power algebra over $A$ on $X$. 

When $A$ is a ring and $\f=f_1,\ldots,f_n$ is a sequence of elements in $A$, then adjoining the degree one variables $X=\{e_1,\ldots,e_n\}$ to kill the cycles  $\f$ produces
\[A\langle X_1\rangle= A\langle e_1,\ldots,e_n\mid \del(x_i)=f_i\rangle \] 
 the Koszul complex on $\f$ over $A$. Note that $\h_0(A\langle X_1\rangle )=A/(\f)$, and hence each dg $A/(\f)$-module is a dg $A\langle X_1\rangle$-module via restriction of scalars along the augmentation $A\langle X_1\rangle\to A/(\f)$. In particular, for any $A/(\f)$-complex $M$, we have 
\begin{equation}
    \label{restricting_e_action}
    e_iM=0\quad\text{for each}\quad i=1,\ldots,n\,.
\end{equation}
\end{chunk}

\begin{chunk}\label{c:change_of_rings} 
We now adapt to our dg setting the classical Cartan--Eilenberg change of ring spectral sequence~\cite[Chapter XVI, Section 5]{Cartan/Eilenberg:1956}. We provide the details for this extension to the (slightly) more general setting needed in what follows. 
Given a map of non-negatively graded dg algebras $A\to B$, and  $M$, $N$ bounded below dg $B$ modules,  there is a spectral sequence 
\[
\E_{p,q}=\Tor^B_p(M,\Tor^A_q(B,N))\implies \Tor^A_{p+q}(M,N)
\]
with differentials 
\[
^r\mathrm{d}_{p,q}\colon ^r\mathrm{E}_{p,q}\to \ ^r\mathrm{E}_{p-r,q+r-1}
\]
constructed as follows. 

Let $V\to M$ be a semifree resolution of $M$ over $B$ and $W\to N$ a semifree resolution of $N$ over $A$. By \cite[Proposition 1.3.2]{Avramov:2010}, the induced map $V\otimes_AW\to M\otimes_AW$ is a quasi-isomorphism, and so we  make identifications
$$\Tor^A(M,N)=\h(V\otimes_AW)=\h(V\otimes_B(B\otimes_AW))\,.$$ 
Let $V_{\leqslant p}$ be the semifree dg $B$-submodule of $V$ with $(V_{\les p})^\natural$ the free graded $B^\natural$-module generated by the basis element of $V^\natural$ in homological degrees at most $p$. 
The filtration of $V$ by these sub dg $B$-modules induces a filtration 
\[
\F_p(V\otimes_B(B\otimes_AW))=\text{Im} \left(V_{\leqslant p}\otimes_B(B\otimes_AW)\to V\otimes_B(B\otimes_AW)\right)\,.
\]
The spectral sequence obtained from this filtration is 
\[
\E_{p,q}=\h_p(V\otimes_B\h_q(B\otimes_AW))\implies \Tor^A_{p+q}(M,N)
\]
with differentials as above; here the $B$-action on $\h_q(B\otimes_A W)$ is through the augmentation $B\to \h_0(B)$. We further identify 
\[
\h_q(B\otimes_AW)\!=\!\Tor_q^A(B,N)\quad\text{and}\quad \h_p(V\otimes_B\Tor_q^A(B,N))\!=\!\Tor^B_p(M,\Tor^A_q(B,N)).
\]

To justify convergence, we can forget the algebra structures and regard $V\otimes_A W$ as a complex filtered by the subcomplexes $\F_p(V\otimes_B(B\otimes_AW)).$
Note that for each  integer $n$, the filtration of \[(V\otimes_A W)_n=\left(V\otimes_B(B\otimes_AW)\right)_n\] 
by its submodules $\left(\F_p(V\otimes_B(B\otimes_AW))\right)_n$ is finite, because $V$ and $W$ are bounded below. The filtration on $V\otimes_AW$ is thus bounded. Using \cite[10.14]{Rotman:2009},  this implies that the spectral sequence converges to $V\otimes_AW$, in the sense that for each $n\in \mathbb Z$, the module $\h_n(V\otimes_AW)$ has a bounded filtration such that for each $q$ the  component of degree $q$ of the associated graded module is
isomorphic to $^\infty\mathrm{E}_{n-q,q}$. 
\end{chunk}

\begin{lemma}
\label{l:large-ineq}
If $\varphi\colon A\to B$ is a map of local dg algebras with residue field $k$ and $M$ is in $\Dfp(B)$, then there is a coefficient-wise inequality of Poincar\'e series 
\begin{equation}
\label{e:large-ineq}
\ps^A_M(t)\preccurlyeq \ps^B_M(t)\cdot \ps^A_B(t)\,.
\end{equation}
\end{lemma}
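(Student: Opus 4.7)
The plan is to apply the change-of-rings spectral sequence of \cref{c:change_of_rings} with $N=k$, viewed as a dg $B$-module via the augmentation $B\to B/\m_B=k$. This yields a convergent spectral sequence
\[
\E_{p,q}=\Tor^B_p(M,\Tor^A_q(B,k))\implies \Tor^A_{p+q}(M,k),
\]
and the desired inequality will follow by bounding $\dim_k \E_{p,q}$ from above and then invoking convergence.

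The key numerical step is the auxiliary bound
\[
\dim_k \Tor^B_p(M,N)\;\leqslant\;\beta_p^B(M)\cdot\dim_k N
\]
for any $B$-module $N$ of finite $k$-dimension. To prove it, I would fix a minimal semifree resolution $V\to M$ over $B$, which exists by \cref{c:resolution} since $M\in\Dfp(B)$. Writing $V^\natural\cong\bigoplus_i\shift^i(B^{\beta_i^B(M)})^\natural$ and viewing $N$ as a dg $B$-module concentrated in degree zero, a direct computation (using $B_{>0}\cdot N=0$) gives $(V\otimes_B N)_p\cong N^{\beta_p^B(M)}$ as a $k$-vector space, and $\Tor^B_p(M,N)=\h_p(V\otimes_B N)$ is a subquotient of it. Applying this with $N=\Tor^A_q(B,k)$, whose $k$-dimension is $\beta_q^A(B)$ by definition, yields $\dim_k \E_{p,q}\leqslant \beta_p^B(M)\cdot\beta_q^A(B)$.

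Convergence of the spectral sequence then provides, for each $n$, a bounded filtration on $\Tor^A_n(M,k)$ whose associated graded pieces are $^\infty\mathrm{E}_{p,n-p}$, each a subquotient of $\E_{p,n-p}$. Summing gives
\[
\beta_n^A(M)=\sum_p\dim_k{}^\infty\mathrm{E}_{p,n-p}\;\leqslant\;\sum_p\beta_p^B(M)\cdot\beta_{n-p}^A(B),
\]
and the right-hand side is the coefficient of $t^n$ in $\ps^B_M(t)\cdot\ps^A_B(t)$. The main subtlety to navigate is that in general the $B$-module structure on $\Tor^A_q(B,k)$ does not factor through $B\to k$, so $\E_{p,q}$ cannot simply be rewritten as copies of $\Tor^B_p(M,k)$; the auxiliary bound circumvents this by working with $N$ only as a $k$-vector space of known dimension.
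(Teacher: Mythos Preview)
Your proof is correct and follows the same spectral-sequence approach as the paper. In fact, your argument is more careful on precisely the point you flag: the paper asserts an isomorphism of $k$-vector spaces
\[
\Tor^B_p(M,\Tor^A_q(B,k))\cong \Tor^B_p(M,k)\otimes_k \Tor^A_q(B,k),
\]
which would require the $B$-module structure on $\Tor^A_q(B,k)$ (induced via $B\to\h_0(B)$, as noted in \cref{c:change_of_rings}) to factor further through $B\to k$. As you correctly observe, this need not hold in general; for instance, with $A=k$ and $B=k[x]/(x^2)$ one has $\Tor^A_0(B,k)=B$ carrying the regular $B$-action, so the two sides of the claimed isomorphism have different dimensions already for $p=q=0$. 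Your auxiliary bound $\dim_k\Tor^B_p(M,N)\leqslant\beta_p^B(M)\cdot\dim_kN$, obtained from a minimal semifree resolution of $M$ over $B$, sidesteps this issue and still yields the required coefficient-wise inequality.
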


\begin{proof} Consider the spectral sequence in \ref{c:change_of_rings} with $N=k$. For all $p$, $q$ we have isomorphisms of  $k$-vector spaces 
\[
\Tor^B_p(M,\Tor^A_q(B,k))\cong \Tor^B_p(M,k)\otimes_k \Tor^A_q(B,k)
\]
which yield 
\[
\rank_k \E_{p,q}=\beta_p^B(M)\beta_q^A(B)\,.
\]
A rank count in the spectral sequence then gives
\[
\ps^A_M(t)=\sum_{i\in \mathbb Z}\beta^A_n(M)t^i\preccurlyeq \sum_{i\in \mathbb Z}\left(\sum_{p+q=i}\rank_k \E_{p,q}\right)t^i= \P_M^B(t)\cdot P_B^A(t)\,.\qedhere
\]
\end{proof}


\begin{remark}
\label{r:large}
   When $A$ and $B$ are local rings, Levin \cite{Levin:1980} shows that equality holds in \eqref{e:large-ineq} for all finitely generated $B$-modules $M$ if and only if the induced homomorphism $\Tor^\vp(k,k)\colon \Tor^A(k,k)\to \Tor^B(k,k)$ is surjective. There Levin called homomorphisms satisfying this property {\it large}. We adopt Levin's terminology and say that a map $\varphi\colon A\to B$ of local dg algebras augmented to $k$ is large if equality holds in \eqref{e:large-ineq} for all $M$ in $\Dfp(B)$. Levin's proof of \cite[Theorem~1.1]{Levin:1980} carries through to show that $\varphi$ is large if and only if the induced map on Tor algebras $\Tor^\vp(k,k)\colon \Tor^A(k,k)\to \Tor^B(k,k)$ is surjective. 
\end{remark}

\section{Poincar\'e series over the Koszul complex}
\label{sec:koszul}
Continuing with notation from \cref{sec:background}, $Q$ is a commutative noetherian local ring with maximal ideal $\m$ and residue field $k$. 
When $Q\to R$ is a surjective  homomorphism of local rings, with kernel generated by a regular sequence, and $M$ is a finitely generated $R$-module, (in)equalities between $\ps_M^R(t)$ and $\ps_M^Q(t)$ are well-known, and are recalled in   \cref{c:recall-ci} below. In this section we show that these results have dg versions that hold without assuming that the kernel is generated by a regular sequence, see \cref{t:Koszul-ineq-eq}, which recover the classical results.

\begin{chunk}
\label{c:recall-ci}
Fix a local ring $(Q,\m,k)$ and set $R=Q/I$ where  $I$ is an ideal generated by a regular sequence of length $n$. Recall that for each finitely generated $R$-module $M$, there are coefficient-wise inequalities: 
\begin{align}
\label{e:RQ}\ps_M^R(t)&\preccurlyeq \ps_M^Q(t)\cdot (1-t^2)^{-n}\,;\\
\label{e:QR}\ps_M^Q(t)&\preccurlyeq \ps_M^R(t)\cdot (1+t)^n\,.
\end{align}
Furthermore,
\begin{enumerate}
\item \label{shamash_ci} if $I\subseteq \m\ann_Q(M)$, then equality holds in \eqref{e:RQ};
\item \label{nagata_ci} if $I\cap \m^2\subseteq \m I$, then equality holds in \eqref{e:QR}.
\end{enumerate}
In \cite[Theorem~5]{Tate:1957}, Tate showed \cref{shamash_ci} when $M$ is cyclic, and the general result is due to Shamash~\cite[Corollary~1, Section~3]{Shamash:1969}, who also provides a proof of \cref{nagata_ci} in \cite[Corollary~1, Section~2]{Shamash:1969}. The original proof of \cref{nagata_ci} is implicit in work of Nagata~\cite[Section~27]{Nagata:1962} where it is expressed in terms of ranks of syzygies. A more modern, and comprehensive, treatment of these (in)equalities is contained in \cite[Section~3.3]{Avramov:2010}. 
\end{chunk}

The main result of the section is the following. 

\begin{theorem} 
\label{t:Koszul-ineq-eq}
Fix a local ring $(Q,\m,k)$, an ideal  $I$ of $Q$ minimally generated by a sequence of length $n$, and set $E$ to be the Koszul complex on a minimal generating set of $I$ and $R=Q/I$. For each $M$ in $\Dfp(R)$, there are coefficient-wise inequalities:
\begin{align}
\label{e:RQ2}\ps_M^E(t)&\preccurlyeq \ps_M^Q(t)\cdot (1-t^2)^{-n}\,;\\
\label{e:QR2}\ps_M^Q(t)&\preccurlyeq \ps_M^E(t)\cdot (1+t)^n\,.
\end{align}
Furthermore,
\begin{enumerate}
\item \label{t_shamash_ci} if $I \subseteq \m\ann_Q(M)$, then equality holds in \eqref{e:RQ2};
\item \label{t_nagata_ci }if $I\cap \m^2\subseteq \m I$, then equality holds in \eqref{e:QR2}.
\end{enumerate}
\end{theorem}

The proof \cref{t:Koszul-ineq-eq} will be given at the end of the section, after we introduce the needed ingredients. 

\begin{notation} 
\label{notation}
For the rest of the section we fix an ideal $I$ of $Q$ and a minimal generating set $\f=f_1,\ldots, f_n$ of $I$ and  we let 
\[
E= Q\langle e_1,\ldots, e_n\mid \del e_i=f_i\rangle
\]
be the Koszul complex on $\f$ over $Q$; cf.\@ \cref{c:gamma}. 

Set $\S=Q[\chi_1,\ldots,\chi_n]$ where $\chi_i$ has degree $-2$; this can be identified with the ring of cohomology operators introduced by Eisenbud~\cite{Eisenbud:1980} and Gulliksen~\cite{Gulliksen:1974}; cf.\@ \cite{Avramov/Sun:1998}.  Define
\[
\varGamma\coloneq Q\langle y_1,\ldots, y_n\rangle\,,
\] the free divided power algebra on degree two divided power variables $y_1,\ldots,y_n.$  It is well-known that $\varGamma$ can be naturally identified with the graded $Q$-linear dual of $\S$ and hence it admits the structure of a graded $\S$-module; this is a classical structure introduced by Macaulay in \cite{Macaulay}. Namely, a graded $Q$-basis for $\varGamma$ is given by $\{y^{(H)}\coloneq y_1^{(h_1)}\cdots y_n^{(h_n)}\mid H=(h_1,\ldots,h_n)\in \mathbb{N}^n\}$ and the $\S$-action is determined by
\[
\chi_i\cdot y^{(H)}=\begin{cases}
    y^{(h_1,\ldots, h_{i-1},h_i-1,h_{i+1},\ldots,h_n)}& h_i\geqslant 1\\
    0 & \text{otherwise}\,.
\end{cases}
\]
\end{notation}

\begin{chunk}{\bf A semifree resolution over $E$.}
\label{c:kresolutions}
Let $M$ be a dg $E$-module and fix a semifree resolution $\epsilon\colon F\xra{\simeq} M$ of $M$ over $Q$, where $F$ is a dg $E$-module and $\epsilon$ is a homomorphism of dg $E$-modules. Such a semifree resolution exists by \cite[2.1]{Avramov/Buchweitz:2000a}; this result does not use the assumption that $\f$ is a (Koszul-)regular sequence which was present in Section 2 of \cite[Section~2]{Avramov/Buchweitz:2000a}. By  \cite[4.2.2]{Pollitz:2021}, which is essentially due to \cite[Proposition~2.6]{Avramov/Buchweitz:2000a}, the semifree dg
$E$-module
\begin{align*}
U_E(F)&\coloneq E\otimes_Q \varGamma\otimes_Q F \quad\text{with differential}\\ \del&\coloneq\del^E\otimes 1\otimes 1+1\otimes 1\otimes \del^F+\sum_{i=1}^ne_i\otimes \chi_i\otimes 1-1\otimes \chi_i\otimes e_i
\end{align*}
and augmentation 
\[
U_E(F)\to M\quad \text{given by}\quad e\otimes y\otimes x\mapsto\begin{cases}
ey\epsilon(x) & \text{if }|y|=0\\
0&\text{otherwise}
\end{cases}
\]
is a semifree resolution of $M$ over $E$; the $E$-action is on the left $E$-factor of $U_E(F)$. 
\end{chunk}

\begin{chunk}
\label{c:dgstructure}
We use \cref{notation} and suppose $\bm{g} = g_1,\ldots,g_d$ is a sequence of elements in $Q$ with $(\f)\subseteq(\bm{g})$. 

Fix  a $Q$-semifree dg algebra resolution $A\xra{\simeq} Q/(\bm{g})$.
Writing
\begin{equation}
\label{e:multiplication1}
    f_i=\sum_{j=1}^d a_{ij} g_j \quad\text{with}\quad a_{ij}\in Q\,
\end{equation}
defines a morphism of dg $Q$-algebras $E\to A$ determined by
\begin{equation}
\label{e:multiplication2}
   e_i\mapsto \sum_{j=1}^d a_{ij} e_j'\quad\text{with}\quad e'_j\in A_1\,, \ \del e_j'=g_j\,. 
\end{equation}
In particular, if $(\f)\subseteq J (\bm{g})$ for some ideal $J$ in $Q$, then one can take the $a_{ij}$ in \cref{e:multiplication1} to belong to $J$ and hence, the morphism in \cref{e:multiplication2} defines a dg $E$-module structure on $A$ where 
the image of multiplication by $e_i$ on $A$  is contained in $JA.$
\end{chunk}

\begin{lemma}
\label{l:equality}
If $M$ is an $R$-complex with $I\subseteq \m\ann_Q(M)$, then there is the following isomorphism of graded $k$-spaces
\[
\Tor^E(M,k)\cong \Tor^Q(M,k)\otimes_Q \varGamma\,.
\] 
\end{lemma}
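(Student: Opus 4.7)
The plan is to extract the isomorphism from the explicit semifree $E$-resolution $U_E(F)=E\otimes_Q\varGamma\otimes_Q F$ recalled in \cref{c:kresolutions}, applied to a carefully chosen $Q$-semifree resolution $F$ of $M$ that carries a dg $E$-module structure with the additional property that $e_iF\subseteq\m F$ for every $i$. Granted such an $F$, I would inspect the four summands of $\del^{U_E(F)}$ and verify that only $1\otimes 1\otimes\del^F$ contributes to the differential on $U_E(F)\otimes_E k$: the term $\del^E\otimes 1\otimes 1$ dies because $\del^E(1)=0$; the term $e_i\otimes\chi_i\otimes 1$ dies since $E_{>0}$ acts as zero on $k$; and the term $-1\otimes\chi_i\otimes e_i$ dies precisely by virtue of $e_iF\subseteq\m F$.

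The principal obstacle is producing such an $F$, and this is exactly where the hypothesis $I\subseteq\m\ann_Q(M)$ enters. Setting $\fa=\ann_Q(M)$ and fixing a minimal generating sequence $\bmg=g_1,\ldots,g_d$ of $\fa$, the hypothesis allows me to write $f_i=\sum_{j}a_{ij}g_j$ with $a_{ij}\in\m$. Let $K=Q\langle e'_1,\ldots,e'_d\mid \del e'_j=g_j\rangle$ be the Koszul complex on $\bmg$; then \cref{c:dgstructure} (taking $J=\m$ there) furnishes a dg $Q$-algebra map $E\to K$ sending $e_i\mapsto\sum_j a_{ij}e'_j$, and in particular $e_iK\subseteq\m K$. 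Since $\bmg$ annihilates $M$ (after, if necessary, replacing $M$ with a quasi-isomorphic $Q/\fa$-complex), $M$ is a dg $K$-module via the augmentation $K\to Q/\fa$, and the existence result \cite[2.1]{Avramov/Buchweitz:2000a} invoked in \cref{c:kresolutions}, now applied with $K$ in place of $E$, yields a $Q$-semifree resolution $F$ of $M$ that is simultaneously a dg $K$-module. Restriction of scalars along $E\to K$ equips $F$ with a dg $E$-structure, and $e_iK\subseteq\m K$ propagates to $e_iF\subseteq\m F$ as required.

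With $F$ in hand the computation of $U_E(F)\otimes_E k$ is routine: as a graded $k$-vector space it equals $\bar\varGamma\otimes_k\bar F$, where $\bar\varGamma=\varGamma\otimes_Q k$ and $\bar F=F\otimes_Q k$, and the only surviving portion of the differential is $1\otimes\bar\del^F$. Because $\bar\varGamma$ is a free $k$-module, taking homology commutes with this tensor, giving
\[
\Tor^E(M,k)\;=\;\h(U_E(F)\otimes_E k)\;=\;\bar\varGamma\otimes_k\Tor^Q(M,k),
\]
and the canonical identification $\bar\varGamma\otimes_k\Tor^Q(M,k)\cong\Tor^Q(M,k)\otimes_Q\varGamma$ of graded $k$-spaces concludes the argument. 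It is worth noting that this approach does not require $F$ to be minimal over $Q$; the containment $e_iF\subseteq\m F$ is the only feature of $F$ that is essential, so all the work is concentrated in the intermediate step of constructing $F$ via the Koszul complex $K$ on a minimal generating set of $\ann_Q(M)$.
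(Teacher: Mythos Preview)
Your argument is correct, and it is essentially the mirror image of the paper's proof. Where you resolve $M$ by a $Q$-semifree dg $K$-module $F$ (with $K$ the Koszul complex on generators of $\ann_Q(M)$) and then compute $k\otimes_E U_E(F)$, the paper instead resolves $k$ by a $Q$-semifree dg algebra $A$ and computes $M\otimes_E U_E(A)$. The roles of $\m$ and $\ann_Q(M)$ in \cref{c:dgstructure} are correspondingly swapped: the paper takes $\bmg$ to generate $\m$ and $J=\ann_Q(M)$, obtaining $e_iA\subseteq\ann_Q(M)A$; you take $\bmg$ to generate $\ann_Q(M)$ and $J=\m$, obtaining $e_iF\subseteq\m F$. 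In the paper's version the summand $e_i\otimes\chi_i\otimes 1$ dies because $e_iM=0$ (as $M$ is an $R$-complex), and $1\otimes\chi_i\otimes e_i$ dies because the $\ann_Q(M)$-factor in $e_iA$ migrates across the $Q$-tensor to annihilate $M$; in yours the corresponding vanishings come from $E_{>0}\cdot k=0$ and from $e_iF\subseteq\m F$ being killed in $\bar F$. The paper's route has the small advantage of not invoking \cite[2.1]{Avramov/Buchweitz:2000a} (a dg algebra resolution $A\to k$ is already available, and \cref{c:dgstructure} supplies the $E$-structure directly); your route dovetails more smoothly with the Poincar\'e series computation in the proof of \cref{t:Koszul-ineq-eq}, which is phrased in terms of $k\otimes_E U_E(F)$.
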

\begin{proof}
Let $A\xra{\simeq} k$ be a $Q$-semifree dg algebra resolution of $k$; see \cite[Section~6.3]{Avramov:2010}. From the assumption $I\subseteq \m\ann_Q(M)$, and applying \cref{c:dgstructure} with $\bm{g}$ a list of minimal generators for $\m$, it follows that the dg $E$-module structure on $A$ can be taken to satisfy the following for each $i$: 
\begin{equation}
\label{e:trivial_action}
e_iA\subseteq \ann_Q(M) A\,.
\end{equation}

Also, there is the following commutative diagram of graded $Q$-modules 
 \begin{equation}
    \begin{tikzcd}
    \label{comm_diagram_1}
    M\otimes_E U_E(A) \ar[r,"\cong"] \ar[d,swap,"1_M\otimes e_i\otimes \chi_i\otimes 1_A"] & M\otimes_Q\varGamma\otimes_Q A\ar[d,swap,"e_i\otimes\chi_i\otimes 1_A"'] \\
  M\otimes_E U_E(A) \ar[r,"\cong"] & M\otimes_Q\varGamma\otimes_Q A
    \end{tikzcd}
    \end{equation}
where the horizontal maps are induced by the multiplication map $M\otimes_E E\xra{\cong} M$ and the vertical maps have degree $-1.$ By \cref{restricting_e_action}, the right-hand map in \cref{comm_diagram_1} is zero and hence so is the left-hand map. Similarly, there is the following commutative diagram of graded $Q$-modules 
 \begin{equation}
    \begin{tikzcd}
    \label{comm_diagram_2}
    M\otimes_E U_E(A) \ar[r,"\cong"] \ar[d,swap,"1_M\otimes 1_E\otimes \chi_i\otimes e_i"] & M\otimes_Q\varGamma\otimes_Q A\ar[d,swap,"1_M\otimes\chi_i\otimes e_i"'] \\
  M\otimes_E U_E(A) \ar[r,"\cong"] & M\otimes_Q\varGamma\otimes_Q A
    \end{tikzcd}
    \end{equation}
    where the horizontal maps are again induced by $E\otimes_E E\xra{\cong} E$. This time the right-hand map in \cref{comm_diagram_2} is zero because of \cref{e:trivial_action}. In particular, the degree $-1$ maps 
    \[
    1_M\otimes e_i\otimes \chi_i\otimes 1_A\quad\text{and}\quad 1_M\otimes 1_E\otimes \chi_i\otimes e_i
    \]
    are both zero on $ M\otimes_E U_E(A)$. In view of the definition of the differential of $U_E(A)$ in \cref{c:kresolutions}, it follows that the isomorphism $M\otimes_E U_E(A)\cong M\otimes_Q\varGamma\otimes_Q A$  of graded $Q$-modules  is in fact one of complexes. Therefore, we have the following isomorphisms in homology: 
    \begin{align*}
        \Tor^E(M,k)&=\h(M\otimes_E U_E(A))\\
        &\cong\h(M\otimes_Q\varGamma\otimes_Q A) \\
        & \cong \h(M\otimes_Q A\otimes_Q \varGamma)\\
        &\cong \h(M\otimes_Q A)\otimes_Q\varGamma\\
        &=\Tor^Q(M,k)\otimes_Q  \varGamma\,;
    \end{align*}
the first and second equalities use that $U_E(A)$ and $A$ are semifree $E$- and $Q$-resolutions of $k$, respectively; the first isomorphism was what was justified above, while the second isomorphism is obvious, and the third isomorphism is because $\varGamma$ is a free graded $Q$-module.
\end{proof}





\begin{proof}[Proof of \cref{t:Koszul-ineq-eq}]
We first prove the inequality \eqref{e:RQ2} and \cref{t_shamash_ci}. Let $F\xra{\simeq} M$ be a semifree resolution of $M$ over $E$. Observe that as graded $k$-spaces there are isomorphisms
\begin{equation}\label{e:iso}
k\otimes_E U_E(F)\cong k\otimes_Q \varGamma \otimes_Q F\cong (k\otimes_Q \varGamma)\otimes_k (k\otimes_Q F)\,.
\end{equation}
By \cref{c:kresolutions}, the homology of the left-hand side is $\Tor^E(M,k)$. Thus $\Tor^E(M,k)$ is   a subquotient of $k\otimes_EU_E(F)$ as a graded $k$-vector space, and the coefficient-wise inequality below has been justified:
\begin{align*}
\ps^E_M(t) &\preccurlyeq  \h_{k\otimes_E U_E(F)} \\
&= \h_{k\otimes_Q \varGamma}(t)\cdot  \h_{k\otimes_Q F}(t)\\
&=\frac{1}{(1-t^2)^n}\cdot \ps^Q_M(t)\,;
\end{align*}
the first equality holds using \cref{e:iso} and the second holds using the definition $\varGamma$ and that $F\to M$ is a semifree resolution over $Q$, since $E$ is free over $Q$. Using \cref{l:equality}, we see that the coefficient-wise inequality above is an equality when $I \subseteq \m\ann_Q(M)$.

We now prove \eqref{e:QR2} and \cref{t_nagata_ci }. Using induction we can assume $n=1$. For the inequality, fix a minimal semifree resolution $U$ of $M$ over $E$. Write 
   \begin{equation}
   \label{decomposition_U}
   U^\natural \cong (V \oplus  V e)^\natural 
   \end{equation}
   as graded $E^\natural\cong Q\oplus Qe$-modules; above $V$ denotes the $Q$-linear span of the semifree basis of $U$ as a dg $E$-module, thus it is a bounded below free graded $Q$-module.
Since $U$ is minimal over $E$ we have 
\begin{equation}
       \Tor^E(M,k)=U\otimes_E k\cong  V\otimes_Q k\,.
   \end{equation}
   Since $E$ is free over $Q$, it follows that $U$ is a free resolution of $M$ over $Q$. In particular, $\Tor^Q(M,k)$ is a  subquotient of $U\otimes_Q k$ regarded a graded $k$-vector space and hence
\begin{equation}\label{poincare_series_comparison}
   \P^Q_M(t)\preccurlyeq \h_{U\otimes_Q k}(t)\,.
   \end{equation}
   Also,    observe that there are isomorphisms of graded $k$-vector spaces
   \begin{align*}
   U\otimes_Q k
   &=(V\oplus  Ve)^\natural \otimes_Q k\\
   &\cong (V\otimes_Q k)\oplus (Ve\oplus_Q k)\\
   &\cong \Tor^E(M,k)\oplus \shift \Tor^E(M,k)
   \end{align*}
   and as a consequence $\h_{U\otimes_Q k}(t)=(1+t)\P^E_M(t)$. Combining this equality with the inequality from \cref{poincare_series_comparison} yields the desired inequality: 
   \[
   \P^Q_M(t)\preccurlyeq (1+t)\P^E_M(t)\,.
   \]

  Next we verify equality holds when $f\in \m\smallsetminus \m^2$. By  \cite[Proposition~2.2.2]{Avramov:2010}, the minimal free resolution $U$ of $M$ over $Q$ is a semifree dg $E$-module. As $U$ is minimal over $Q$ it is also minimal over $E$ and hence, we can write $U$ as in \cref{decomposition_U}. 
Therefore, equality holds in \cref{poincare_series_comparison} giving the desired equality. 
\end{proof}

\begin{remark}
    Note that the inequality \eqref{e:QR2} can also be justified using the spectral sequence in \ref{c:change_of_rings}. In fact, this spectral sequence degenerates when $N=k$ if and only if $Q\to E$ is large in the sense \cref{r:large}. Moreover, one can argue as in \cite{Levin:1980}, that when $I\cap \m^2 \subseteq \m I $ the spectral sequence degenerates and use this to give an alternate proof of the equality in \cref{poincare_series_comparison}.
\end{remark}

\section{Quasi-complete intersection homomorphisms}
\label{sec:qci}

In this section, $(Q,\m,k)$ is a local ring, $\varphi\colon Q\to R$ is a surjective homomorphism and we set $I=\Ker\varphi$. 


\begin{chunk}
\label{c:qci}{\bf Quasi-complete intersection homomorphisms.}
Let $\f=f_1, \dots, f_n$ be a minimal generating set of $I$, and let $E$ be the Koszul complex on $\f$. Following the procedure recalled in \ref{c:gamma}, construct the {\it two-step Tate complex}
\[
F\coloneq Q\langle X_1,X_2\rangle\,,
\] 
where $X_1$, $X_2$ are two sets of variables such that $Q\langle X_1\rangle =E$ and the variables in $X_2$ kill a basis of $\h_1(E)$. That is to say, the differential on $F$ maps $X_2$ bijectively to a set of cycles whose homology classes minimally generate $\h_1(E).$

The map $\varphi$ is said to be a {\it quasi-complete intersection} (q.c.i.) homomorphism if $\h_1(E)$ is a free $R$-module and the natural map 
\[
{\sf \Lambda}_R\h_1(E)\to \h(E)
\]
is an isomorphism of graded $R$-algebras. This property first appeared in work of Rodicio \cite{Rodicio:1995} and Blanco, Majadas and Rodicio \cite{Blanco/Majadas/Rodicio:1996}, and the current terminology was introduced by Avramov, Henriques and \c{S}ega \cite{Avramov/Henriques/Sega:2013}. According to \cite[Theorem~1]{Blanco/Majadas/Rodicio:1998}, $\varphi$ is q.c.i.~if and only if $F$ is the minimal free resolution of $R$ over $Q$. 

Such maps can also be defined in terms of vanishing of Andr\'e-Quillen functors  $\mathrm{D}_i(R/Q;-)$ whenever $i\ges 3$, and Quillen~\cite{Quillen:1970} conjectured these are the only maps with this kind of behavior: \emph{if $\mathrm{D}_i(R/Q;-)=0$ for  $i\gg 0$, then $\varphi$ must be q.c.i.} See \cite{Avramov/Henriques/Sega:2013} for more details regarding these homomorphisms.
\end{chunk}
\begin{chunk}
\label{equality}
Recall that $\grade_QR$ denotes the maximal length of a $Q$-regular sequence in $I$. 
Assume that $\varphi$ is a q.c.i.~map. By \cite[Lemma 1.2]{Avramov/Henriques/Sega:2013} and \cite[Theorem 4.1]{Avramov/Gasharov/Peeva:1997}, we have
\begin{equation}
\label{e:qci-grade}
\depth Q-\depth R=\grade_QR=|X_1|-|X_2|\,,
\end{equation}
where $F=Q\langle X_1,X_2\rangle$ is the two-step Tate resolution of $R$ over $Q$.
Also,  the following formula holds
\begin{equation}
\label{e:Poincare-formula}
    \P_N^R(t)\cdot\frac{(1-t)^{\edim R}}{(1-t^2)^{\depth R}}=\P_N^Q(t)\cdot\frac{(1-t)^{\edim Q}}{(1-t^2)^{\depth Q}}\,
\end{equation}
when $N=k$ by \cite[Theorem 6.1]{Avramov/Henriques/Sega:2013} and for any finitely generated $R$-module $N$ when $\m^2\cap I\subseteq \m I$ by \cite[Theorem~6.2]{Avramov/Henriques/Sega:2013}. The proof of \cref{e:Poincare-formula} in the later case is based on the fact,  established in the proof of \cite[Theorem~6.2]{Avramov/Henriques/Sega:2013}, that the homomorphism $\varphi$ is large. Our discussion in \cref{r:large} yields that \eqref{e:Poincare-formula} holds, more generally, for all $N$ in  $\Dfp(R)$ when $\m^2\cap I\subseteq \m I$.

Finally, observe that \[F=Q\langle X_1,X_2\rangle=E\langle X_2\rangle\] is the minimal semifree resolution of $R$, considered as a dg module over $E$, and thus 
\begin{equation}
\label{e:RoverE}
\P_R^E(t)=\frac{1}{(1-t^2)^{|X_2|}}\,.
\end{equation}
\end{chunk}

\begin{chunk}
If $M$ is in $\Dfp(R)$, then the following inequality holds: 
 \begin{equation}
 \label{e:inert}
\P^R_M(t)\P^Q_k(t)\preccurlyeq \P^Q_M(t)\P^R_k(t)\,.
 \end{equation}
This was proved by Lescot in \cite{Lescot:1990} in the case that $M$ is an $R$-module. The proof relies on a  convergent spectral sequence, that can be extended for $M$ in $\Dfp(R)$.  
 Following Lescot, when equality holds in \eqref{e:inert}, $M$ is said to be {\it inert} by $\varphi$.
 
 If $I$ is generated by a regular sequence of length $n$, then \ref{c:recall-ci}(1)  asserts that any object $M$ in $\Dfp(R)$ with $I\subseteq \m\ann_Q(M)$ is inert by $\varphi$. 
\end{chunk}

\begin{remark}\label{r:equivalent-inert}
If $\vp$ is q.c.i.\@ and $M$ is in $\Dfp(R)$, then the following are equivalent:
\begin{enumerate}
    \item $M$ is inert by $\vp$;
    \item there is an equality of formal power series \[ \P_M^R(t)\cdot\frac{(1-t)^{\edim R}}{(1-t^2)^{\depth R}}=\P_M^Q(t)\cdot\frac{(1-t)^{\edim Q}}{(1-t^2)^{\depth Q}}\,.\]
\end{enumerate}
Furthermore, if $I\subseteq \m \ann_Q(M)$, then the following is also equivalent:
\begin{enumerate}
    \item[(3)] $\P^R_M(t)=\P^Q_M(t)\cdot (1-t^2)^{\grade_Q R}\,.$
\end{enumerate}
Indeed, the equivalence of (1) and (2) is straightforward using the already noted fact that \eqref{e:Poincare-formula} holds with $N=k$. 

Next assume that $I\subseteq \m \ann_Q(M)$. If $\ann_Q(M)=Q$, then all of these equalities hold vacuously. So we can further assume $\ann_Q(M)\subseteq \m$, and hence $I\subseteq \m^2$; therefore,  $\edim Q=\edim R$. It now follows from a direct computation, using also \eqref{e:qci-grade}, that (2) and (3) are equivalent. 
\end{remark}

\begin{theorem}
\label{t:main}
Let $(Q,\m,k)$ be a local ring, $\vp\colon Q\to R$ a surjective quasi-complete intersection map, and  set $I=\Ker\vp$. For  $M$  in $\Dfp(R)$ with $I\subseteq \m\ann_Q(M)$, then 
\[
\P^R_M(t)=\P^Q_M(t)\cdot (1-t^2)^{\grade_Q R}\,.
\]
Equivalently, $M$ is inert by $\vp$.
\end{theorem}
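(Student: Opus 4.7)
My plan is to show the augmentation $E\to R$ is \emph{large} in the sense of \cref{r:large} by verifying equality in \cref{l:large-ineq} at $M=k$; the conclusion then follows by combining largeness with \cref{t:Koszul-ineq-eq}(1). Let $E$ be the Koszul complex on a minimal generating set of $I$ of length $n$, so $\ps_R^E(t)=(1-t^2)^{-|X_2|}$ by \cref{e:RoverE} and $n-|X_2|=\grade_Q R$ by \cref{e:qci-grade}. Since $\ann_Q(M)\subseteq \m$, the hypothesis $I\subseteq \m\ann_Q(M)$ forces $I\subseteq \m^2$, hence $\edim Q=\edim R$. Moreover $I\subseteq \m^2=\m\ann_Q(k)$, so \cref{t:Koszul-ineq-eq}(1) applies with $M=k$ and gives $\ps_k^E(t)=\ps_k^Q(t)(1-t^2)^{-n}$.

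Equality at $M=k$. Using \cref{e:Poincare-formula} with $N=k$ together with $\edim Q=\edim R$ yields $\ps_k^R(t)=\ps_k^Q(t)(1-t^2)^{-\grade_Q R}$, so
\[
\ps_k^R(t)\cdot \ps_R^E(t) \,=\, \ps_k^Q(t)(1-t^2)^{-\grade_Q R-|X_2|} \,=\, \ps_k^Q(t)(1-t^2)^{-n} \,=\, \ps_k^E(t).
\]
Thus equality holds in \cref{l:large-ineq} for $M=k$. By the dg version of Levin's theorem recorded in \cref{r:large}, this forces the induced map on Tor algebras $\Tor^E(k,k)\to\Tor^R(k,k)$ to be surjective, which in turn forces equality in \cref{l:large-ineq} for every $M\in \Dfp(R)$; that is, $E\to R$ is large.

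Conclusion. Specializing largeness to the given $M$ gives $\ps_M^E(t)=\ps_M^R(t)(1-t^2)^{-|X_2|}$, while \cref{t:Koszul-ineq-eq}(1) (applicable since $I\subseteq \m\ann_Q(M)$) gives $\ps_M^E(t)=\ps_M^Q(t)(1-t^2)^{-n}$. Equating these and solving for $\ps_M^R(t)$ yields the displayed identity, and in light of the $k$-formula above this is exactly equality in Lescot's inequality \cref{e:inert}, i.e., $M$ is inert by $\vp$. The crux is Levin's equivalence \cref{r:large}: once equality in \cref{l:large-ineq} at $M=k$ is observed from the formulas already assembled, it propagates automatically to every $M\in\Dfp(R)$, so no explicit construction of a minimal semifree $E$-resolution of $M$ is required; the only real obstacle is noticing that the Koszul equality of \cref{t:Koszul-ineq-eq}(1) and the q.c.i.~$k$-formula from \cref{e:Poincare-formula} together force largeness.
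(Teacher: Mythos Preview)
Your argument is correct and takes a genuinely different route from the paper's. The paper proves the theorem by a \emph{squeeze}: on one side it combines \cref{t:Koszul-ineq-eq}(1) with the coefficient-wise inequality of \cref{l:large-ineq} for $E\to R$ to get $\ps_M^Q(t)(1-t^2)^{-n}\preccurlyeq \ps_M^R(t)(1-t^2)^{-m}$, and on the other side it applies Lescot's inequality \cref{e:inert} together with the $k$-formula \cref{e:qcik} to get the reverse; equality is then forced. You instead bypass Lescot entirely by establishing the structural fact that $E\to R$ is \emph{large} whenever $\vp$ is q.c.i.\ and $I\subseteq\m^2$: you check equality in \cref{l:large-ineq} at $M=k$ from \cref{t:Koszul-ineq-eq}(1), \cref{e:RoverE}, and \cref{e:Poincare-formula}, then invoke the dg Levin criterion of \cref{r:large} to propagate equality to every $M\in\Dfp(R)$. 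This is more conceptual and yields a stronger intermediate statement (largeness of $E\to R$, not just equality for the particular $M$), which the paper only recovers a posteriori in its remarks. The trade-off is that your proof leans on the implication ``equality in \cref{e:large-ineq} at $M=k$ forces surjectivity of $\Tor^E(k,k)\to\Tor^R(k,k)$'', which is the edge-map argument in the spectral sequence of \cref{c:change_of_rings}; this is certainly part of what ``Levin's proof carries through'' means in \cref{r:large}, but it is asserted there rather than proved, whereas the paper's squeeze uses only the inequality of \cref{l:large-ineq} (proved in full) together with Lescot's inequality.
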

\begin{proof}
Let $E$ denote the Koszul complex on a minimal generating of $I$, and let $F=R\langle X_1, X_2\rangle$ be the two-step Tate complex, as in \ref{c:qci}. Observe $|X_1|=n$ and set
\[
m\coloneq|X_2|=\rank_k (\h_1(E)\otimes_R k)\,.
\]
There is nothing to show when $\ann_Q(M)=Q$, so we can assume $\ann_Q(M)\subseteq \m$. It follows that $\edim R=\edim Q$, and since \eqref{e:Poincare-formula} holds with $N=k$ we can use \eqref{e:qci-grade} to obtain 
\begin{equation}
\label{e:qcik}
\P_k^Q(t)=\P_k^R(t)\cdot (1-t^2)^{n-m}\,.
\end{equation}

The following is justified by  \cref{t:Koszul-ineq-eq}\cref{t_shamash_ci}, \cref{l:large-ineq} (applied to the map of local dg algebras $E\to R$), and \eqref{e:RoverE}:
\begin{equation}
    \label{e:1}
    \dfrac{\P^Q_M(t)}{(1-t^2)^{n}}=P_M^E(t)\preccurlyeq \P_M^R(t)\cdot \P_R^E(t)=\frac{P_M^R(t)}{(1-t^2)^{m}}\,;
\end{equation}
Now observe that
\begin{align*}
\frac{\P_M^R(t)\cdot \P_k^Q(t)}{(1-t^2)^{m}}&\preccurlyeq \frac{\P_M^Q(t)\cdot \P_k^R(t)}{(1-t^2)^{m}}\\
&=\frac{\P_M^Q(t)}{(1-t^2)^{n}}\cdot\frac{\P_k^R(t)}{(1-t^2)^{m-n}}\\
&=\frac{\P_M^Q(t)}{(1-t^2)^{n}}\cdot\P_k^Q(t)\,;\end{align*}
the first coefficient-wise inequality is from \eqref{e:inert}, the first equality is clear, and the last equality is from \eqref{e:qcik}. From this and \cref{e:1}, it follows that 
\[
\frac{\P_M^R(t)\cdot \P_k^Q(t)}{(1-t^2)^{m}}=\frac{\P_M^Q(t)\cdot\P_k^Q(t)}{(1-t^2)^{n}}\,.
\]
Canceling the factors of $\P^Q_k(t)$, and another application of \eqref{e:qcik} yields the desired equality in the statement. By \cref{r:equivalent-inert},  this equality holds if and only if $M$ is inert by $\vp$. 
\end{proof}

\begin{remark}
    The proof of \cref{t:main} shows that, under the hypotheses of the theorem, equality must hold in \eqref{e:1}, and thus: 
    \[
    \P_M^E(t)=\frac{P_M^R(t)}{(1-t^2)^{m}}\,.
    \]
\end{remark}

\begin{remark}
\label{r:mingen}
If a minimal generating set of $I$ can be extended to a minimal generating set of $\m$, that is to say $I\cap \m^2\subseteq \m I$, then $\varphi$ is large, as noted in \cref{equality}.  As a consequence, factoring $\vp$ as
$Q\to E\to R$
it follows that $E\to R$ is large. 

We remark that one can directly show, essentially by the same argument in \cite[Theorem~5.3]{Avramov/Henriques/Sega:2013}, that $E\to R$ is large when $I\cap \m^2\subseteq \m I$ and combining this with \cref{t:Koszul-ineq-eq}\cref{t_nagata_ci } we recover that \cref{e:Poincare-formula} holds for any  $N$ in $\Dfp(R)$; this would go through the base change formula on Poincar\'e series for the Koszul extension $Q\to E$, and hence would be analogous to the proof of \cref{t:main}. 
\end{remark}

We end the paper with another coefficient-wise inequality comparing Poincar\'e series along surjective q.c.i.\@ homomorphisms, extending some previously known results; see \cref{r:previous_inequality,r:cx}. 

\begin{proposition}\label{e:generalineq}
Let $\varphi\colon Q\to R$ by a surjective quasi-complete intersection map. Let $E$ denote the Koszul complex on a set of minimal generators of $\Ker\varphi$ and set $n=\rank_Q(E_1)$ and $m=\rank_k (\h_1(E)\otimes_R k).$ For any $M$ in $\Dfp(R)$ we have a coefficient-wise inequality
\begin{equation*}
\P^Q_M(t)\preccurlyeq \frac{(1+t)^{n-m}}{(1-t)^m}\P_M^R(t)\,.
\end{equation*}
Equality holds above when $I\cap \m^2\subseteq \m I$.
\end{proposition}
\begin{proof}
    Putting together equations \cref{e:QR2} and \cref{e:RoverE}, and \cref{l:large-ineq}, we obtain the coefficient-wise (in)equalities 
\begin{align*}
\P^Q_M(t)&\preccurlyeq \P_M^E(t)\cdot(1+t)^n\preccurlyeq \P_M^R(t) \P_R^E(t)\cdot (1+t)^n=\P_M^R(t)\cdot \frac{1}{(1-t^2)^m}\cdot (1+t)^n\,,
\end{align*}
yielding the desired inequality. When $I\cap \m^2\subseteq \m I$, equality holding has already been noted in \cref{equality}.
\end{proof}

\begin{remark}\label{r:previous_inequality}
Let $m$, $n$ be as in \cref{e:generalineq}.   
    The inequality in \cref{e:generalineq} is an extension of \eqref{e:QR}, which covers the case $m=0$; cf.\@ the discussion at the end of \cref{c:recall-ci}. It also extends \cite[Corollary 3.6]{Bergh/Celikbas/Jorgensen:2014}, which addresses the case when $n=m=1$ (that is, when $\Ker\varphi$ is generated by an \emph{exact zero divisor}). 
\end{remark}
    
The inequality established in \cref{e:generalineq} can be used to relate asymptotic invariants of  $M$ along $\varphi$ as described below.

   Recall the  {\it complexity} and {\it curvature} of $M$ over $R$, denoted $\cx_R(M)$ and $\curv_R(M)$ respectively, measure the polynomial and the exponential rate of growth of the Betti sequence of $M$ over $R$, respectively. 
   See \cite[Section~4]{Avramov:2010} for precise definitions and more details. The following is an immediate consequence of \cref{e:generalineq}.


\begin{corollary}
\label{cor:complexity}
In the notation of \cref{e:generalineq},  the following inequalities hold
\begin{align*} 
\begin{split}
\pushQED{\qed} 
    \cx_Q(M)&\les \cx_R(M) +m\\
    \curv_Q(M)&\les 
    \max\{\curv_{R}(M),1\}\,.  \qedhere
    \pushQED{\qed} 
    \end{split}
\end{align*}
\end{corollary}

\begin{remark}\label{r:cx}
Continuing with the notation from \cref{e:generalineq}, when $m=0$, (i.e. $\varphi$ is a complete intersection homomorphism) the inequalities in \cref{cor:complexity} are well known. In fact, in this case stronger relationships for these invariants over $Q$ and $R$  follow from the inequalities in \cref{c:recall-ci}. Namely, 
\begin{align*}
    &\cx_Q(M)\les\cx_R(M)\les \cx_Q(M) +n\\
    \curv_Q(M)&=\curv_{R}(M)\quad\text{when }\pdim_R(M)=\infty\,;
\end{align*}
see \cite[Proposition~4.2.5(4)]{Avramov:2010}.

    When $m>0$, as far as the authors are aware of, the only known results that give similar lower bounds for $\cx_Q(M)$ and $\curv_Q(M)$,  in terms of the invariants defined over $R$, are  established in recent joint work of the second author in \cite{Sega/Sireeshan:2024}. There $\Ker\varphi$ is generated by an exact zero divisor (that is, $n=m=1$) and the residue field has characteristic zero. We expect similar lower bounds to hold more generally, but we do not have additional evidence at this time. 
\end{remark}

\begin{ack} This material is based upon work supported by the National Science Foundation under Grant No.\@ DMS-1928930 and by the Alfred P. Sloan Foundation under grant G-2021-16778, while the second author was in residence at the Simons Laufer Mathematical Sciences Institute (formerly MSRI) in Berkeley, California, during the Spring 2024 semester. The first author was supported by the National Science Foundation under Grant No.\@ DMS-2302567.
Finally, we thank the referee for their comments that greatly improved the paper. 
\end{ack}

\bibliographystyle{amsplain}
\bibliography{refs}

\end{document}